\begin{document}
\title{Bounded prime gaps in short intervals}
\theoremstyle{plain}
\newtheorem{thm}{Theorem} \renewcommand{\thethm}{\hskip -4  pt}
\newtheorem{lem}{Lemma} 
\newtheorem{cor}{Corollary}
\theoremstyle{definition} 
\newtheorem{example}{Example}
\newtheorem{prob}{Problem}\newtheorem{conj}{Conjecture}
\newtheorem{defn}{Definition}
\newtheorem{rem}{Remark}
\newtheorem{ack}{Acknowledgements}
\renewcommand{\theack}{\hskip -4 pt}
\def\cprime{$'$}
\newcommand{\C}{{\mathbb C}}\newcommand{\Hh}{{\mathcal H}} 
\newcommand{\R}{{\mathbb R}} \newcommand{\N}{{\mathbb N}}
\newcommand{\Z}{{\mathbb Z}}
\newcommand{\abs}[1]{{\left| {#1} \right|}} \newcommand{\p}[1]{{\left(
      {#1} \right)}} \newcommand{\jtf}[1]{{#1}^{\diamond}}

\newcommand{\Oh}[1]{{O \p{#1}}}

\renewcommand{\Re}{\operatorname{Re}} \renewcommand{\Im}{\operatorname{Im}} 
\nonstopmode
\date{}
 \author{Johan Andersson\thanks{Stockholm University, Department of Mathematics, Stockholm University, SE - 106 91 Stockholm, Sweden, Email: johana@math.su.se}}

\maketitle
\begin{abstract} We generalise  Zhang's and Pintz recent results on bounded prime gaps to give a lower bound for the the number of prime pairs bounded by $6 \cdot 10^7$ in the short interval $[x,x+x (\log x)^{-A}]$. Our result follows only by analysing Zhang's proof of Theorem 1, but we also explain how a sharper variant of Zhang's Theorem 2 would imply the same result for shorter intervals. 
\end{abstract}

Yitang Zhang \cite{Zhang} in his recent landmark paper proved the there are infinitely many weak prime pairs. More precisely he proved that
\begin{gather} \label{b}
  \liminf_{n \to \infty} (p_{n+1}-p_n)<7 \cdot 10^7.
\end{gather}
 His method is a variant of the method of Goldston-Pintz-Yildirim \cite{GPY1,GPY2}. For a nice introduction to this method see Soundararajan \cite{Sound}. Zhang not only realised that it is sufficient to consider divisors $d$ with only small prime factors in the definition of the real arithmetical function $\lambda(n)$ that is fundamental in the method\footnote{This had been realised already by Motohashi-Pintz \cite{MotPin}, see the remark in Pintz \cite{Pintz}, although they did not manage to prove what corresponds to Theorem 2 in Zhang \cite{Zhang} that ultimately is what was needed.}, but he also managed to utilise this insight. As in previous work on related results, Zhang used the notation of admissible set. A set \begin{gather} \Hh= \{h_1,\ldots,h_{k_0} \} \end{gather}  is called admissible if there is no obvious arithmetical obstruction that makes it impossible for $\Hh+n$ to be prime for all elements (such as $\Hh=\{1,2\}$) except for a possible exceptional case. The Hardy-Littlewood conjecture asserts
that the number of $n$ less than $x$ such that $n+\Hh$ consists of only primes should be asymptotic to $\mathfrak S(\Hh) x/(\log x)^k$, where $\mathfrak S(\Hh)$ is a certain positive constant.  While far from being proven, it gives good intuition what the right answer should be for these questions. 

Zhang's result \eqref{b}   follows from his result that for any admissible $\Hh$ with at least $3.5 \cdot 10^6$ elements there are infinitely many $n$ such that the set $n+\Hh$ contains at least two primes.  By being more careful with the estimates and choice of the admissible set, Trudgian \cite{Trudgian} already improved Zhang's constant $7 \cdot 10^7$ to $6 \cdot 10^7$. There is also a team effort led by Terence Tao that as of this moment has managed to improve the bound to a number slightly less than $4\cdot 10^5$; for the latest results see \cite{polymath}. Since this is work in progress and depends on more substantial changes to the proof of Zhang's Theorem 1, we have chosen not to use this result in this version of the paper.

 Zhang's bounded prime gap theorem ultimately follows from the inequality
\begin{gather} \label{a} \sum_{x \leq n \leq 2x}\left (\sum_{i=1}^{k_0} \theta(n+h_i) -\log(3x) \right) (\lambda(n))^2 \geq  (\omega+o(1))x (\log x)^{k_0+l_0+1}, 
\end{gather}
for some constant $\omega>0$ where 

\begin{gather*}
     \theta(n)=\begin{cases} \log n, & \text{if $n$ is prime,} \\ 0, & \text{otherwise,} \end{cases} \end{gather*} and  $\lambda$ is defined as follows (following Zhang \cite[(2.12)]{Zhang}):
\begin{gather}
 \label{lambdadef} \lambda(n)=\sum_{d|(P(n),\mathcal P)}  \mu(d) g(d), \qquad  g(y)=\begin{cases} \frac 1 {(k_0+l_0)!} \left(\log \frac D y \right)^{k_0+l_0}, & y<D, \\ 0, &   y \geq D, \end{cases}\\ \intertext{where} \notag
    k_0=3.5 \cdot 10^6, \qquad l_0=180, \qquad \varpi=1/1168, \qquad D=x^{\varpi+1/4},   \\
    D_1=x^\varpi,  \qquad \mathcal P=\prod_{p \leq D_1} p, \qquad P(n)=\prod_{i=1}^{k_0} (n+h_i). \notag
 \end{gather}
By estimating $\lambda(n)$ trivially by its definition  \eqref{lambdadef} and absolute values we see that \[\lambda(n) \ll \tau\left(\prod_{i=1}^{k_0} (n+h)\right) (\log D)^{k_0+l_0},\] where $\tau(n)$ denotes the divisor function, and from the well known estimate $\tau(n) \ll n^{\varepsilon}$ one sees that 
\begin{gather} \label{aaa} 
   \lambda(n) \ll_\varepsilon x^\varepsilon, \qquad \qquad (n \ll x).
\end{gather}
The equations \eqref{a} and \eqref{aaa} immediately yield that the number of weak prime pairs less than $x$ is at least of the order $x^{1-\varepsilon}$ for any $\varepsilon>0$, and thus Zhang's proof method gives somewhat stronger results than he states in his paper.  The point here is that we do not only have that the left hand side of the inequality in \eqref{a} is  positive, but also that it is bounded from below by $x^{1-\varepsilon}$. Pintz \cite{Pintz} was the first to come out with a paper including such a result. He furthermore managed to prove\footnote{His results are in fact much more general and much deeper than this, see his paper.}  this result where $x^{-\varepsilon}$ is replaced by a power of $\log x$. 

In this short paper we will consider the question of existence of pairs of primes with bounded gaps in short intervals, such as $[x,x+\Delta(x)]$ for some  function $\Delta(x)=o(x)$. It turns out that in a similar way as in Pintz \cite{Pintz} it is sufficient to analyse the proof of Theorem 1 in Zhang \cite{Zhang} and use his version of Theorem 2 (which seems to be the deepest part of his paper) as is, in order to prove the following result.

\begin{thm} Suppose that  $A \geq  0$ and $\varepsilon>0$.   Then the interval $[x,x+x/(\log x)^A]$ contains at least $(1+o(1))x^{1-\varepsilon}$ pairs of primes $p_1,p_2$ such that  $1<|p_1-p_2|<6 \cdot 10^7$.
\end{thm}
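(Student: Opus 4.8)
The plan is to localise Zhang's inequality \eqref{a} to the short interval $[x,x+H]$, where $H=x(\log x)^{-A}$, with the right-hand side diminished by the factor $H/x=(\log x)^{-A}$, and then to read off the count of prime pairs from the localised inequality together with the trivial bound \eqref{aaa}, exactly as in the long-interval discussion above. First I would go through Zhang's proof of his Theorem~1 and replace the summation range $n\in[x,2x]$ everywhere by $n\in[x,x+H]$. After opening the square in $(\lambda(n))^2$ one is left, as in Zhang, with the two families of sums $\sum_n(\lambda(n))^2$ and $\sum_n\theta(n+h_i)(\lambda(n))^2$, and each of these reduces to counting integers, respectively primes, in residue classes to a modulus of the shape $[d_1,d_2]$ with $d_1,d_2<D$, so that $[d_1,d_2]<D^2=x^{1/2+2\varpi}$ and $[d_1,d_2]$ is $D_1$-smooth. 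The main terms in these counts are proportional to the length of the range of summation, hence acquire the factor $H/x=(\log x)^{-A}$ up to $1+o(1)$; consequently the localised form of \eqref{a} should read
\[ \sum_{x\leq n\leq x+H}\left(\sum_{i=1}^{k_0}\theta(n+h_i)-\log(3x)\right)(\lambda(n))^2\;\geq\;(\omega+o(1))\,H\,(\log x)^{k_0+l_0+1}. \]

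The only real work, and the main obstacle, is to check that the error terms stay negligible after localisation. The diagonal errors coming from $\sum_n(\lambda(n))^2$ are bounded by a fixed power of $D$, namely $\ll D^{2+\varepsilon}=x^{1/2+2\varpi+\varepsilon}=o(H)$ since $\tfrac12+2\varpi<1$ and $H\asymp x(\log x)^{-A}$; this needs no new input. The arithmetic errors coming from $\sum_n\theta(n+h_i)(\lambda(n))^2$ are controlled, for moduli below $x^{1/2-\delta}$, by the Bombieri--Vinogradov theorem, and for the larger smooth moduli up to $x^{1/2+2\varpi}$ by Zhang's Theorem~2. The point that makes it possible to invoke Zhang's Theorem~2 \emph{verbatim} is that for a range of length $H=x(\log x)^{-A}$ one has, for any modulus $q$ and reduced residue $a$,
\[ \psi(x+H;q,a)-\psi(x;q,a)-\frac{H}{\phi(q)}=\left(\psi(x+H;q,a)-\frac{x+H}{\phi(q)}\right)-\left(\psi(x;q,a)-\frac{x}{\phi(q)}\right), \]
so the short-interval error is dominated by the long-interval errors at $x$ and at $x+H$ separately (the parameters $x^{1/2+2\varpi}$ and $x^{\varpi}$ being comparable to their analogues at $x+H$ up to $1+o(1)$). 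Since both Bombieri--Vinogradov and Zhang's Theorem~2 save an arbitrary power of $\log$, applying them at the two points $x$ and $x+H$ bounds the localised arithmetic error by $\ll x(\log x)^{-B}$ for every $B>0$; choosing $B>A+k_0+l_0+2$ makes this $o\!\left(H(\log x)^{k_0+l_0+1}\right)$, which is exactly what the previous step requires. This is the step I expect to be the crux: everything else is bookkeeping, whereas the legitimacy of the subtraction trick at the full strength of Zhang's Theorem~2 is what confines the result to intervals as short as $x(\log x)^{-A}$ and no shorter.

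It remains to extract the pairs. Fix an admissible $\Hh$ with $k_0=3.5\cdot10^6$ elements and diameter $h_{k_0}-h_1<6\cdot10^7$ (Trudgian's set). For $x$ large, every $n\in[x,x+H]$ and every $i$ satisfy $n+h_i<3x$, so $\theta(n+h_i)<\log(3x)$; hence only those $n$ for which at least two of $n+h_1,\dots,n+h_{k_0}$ are prime contribute positively to the left-hand side of the displayed inequality, and each such term is $\leq k_0\log(3x)(\lambda(n))^2\ll_\varepsilon x^{2\varepsilon}\log x$ by \eqref{aaa}. Comparing with the lower bound shows that the number of such $n$ is $\gg_\varepsilon x^{1-2\varepsilon}(\log x)^{k_0+l_0-A}\gg_\varepsilon x^{1-3\varepsilon}$. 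Each such $n$ produces a pair of distinct odd primes $p_1=n+h_i$, $p_2=n+h_j$ with $1<|p_1-p_2|\leq h_{k_0}-h_1<6\cdot10^7$, and a given pair arises from at most $k_0^2$ values of $n$, so the interval $[x,x+H]$ contains $\gg_\varepsilon x^{1-3\varepsilon}$ such prime pairs. Since this holds for every $\varepsilon>0$, applying it with $\varepsilon/6$ in place of $\varepsilon$ shows the interval contains $\gg x^{1-\varepsilon/2}=x^{\varepsilon/2}\cdot x^{1-\varepsilon}$ prime pairs, and as $x^{\varepsilon/2}\to\infty$ this is in particular $\geq(1+o(1))x^{1-\varepsilon}$, which is the assertion of the theorem.
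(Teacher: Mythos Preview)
Your proposal is correct and follows essentially the same route as the paper: localise Zhang's argument to $[x,x+H]$, observe that the short-interval distribution error is bounded by applying Zhang's Theorem~2 at the two endpoints (your subtraction trick is exactly the content of the paper's one-line proof of Lemma~2), then use Trudgian's admissible set and the pointwise bound \eqref{aaa} to extract the count. The paper packages the argument into Lemmas~1--3 (with Lemma~3 stated for general $\Delta(x)\ll x$, hence invoking Heath-Brown's short-interval prime number theorem and an explicit Cauchy--Schwarz step to strip the divisor-function weights from $\mathcal E_i$), but the substance and the key observation are the same as yours.
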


\begin{proof} By using Trudgian's construction \cite{Trudgian} of an admissible set it follows immediately from Lemma 1. \end{proof}
\begin{rem} By some more work, such as using  estimates for the divisor functions (Lemma 8 in Zhang \cite{Zhang}) or similar methods as in Pintz \cite{Pintz} it is possible to improve the lower bound by replacing $x^{-\varepsilon}$ with a power of $\log x$. \end{rem}

\begin{lem}
  Assume that $\Hh$ is an admissible set with at least $k_0=3.5 \cdot 10^6$ elements. Then, given $\varepsilon,A>0$,   the interval $[x,x+x/(\log x)^A]$ contains at least $(1+o(1))x^{1-\varepsilon}$  integers $n$ such that the set $n+\Hh$ contains at least two primes.
\end{lem}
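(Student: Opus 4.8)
Write $H:=x/(\log x)^{A}$, $L:=(\log x)^{k_0+l_0+1}$ and
\[
  F(n):=\p{\sum_{i=1}^{k_0}\theta(n+h_i)-\log(3x)}(\lambda(n))^{2}.
\]
The plan is to reduce the lemma to a short-interval version of Zhang's inequality \eqref{a}, namely
\[
  \sum_{x\le n\le x+H}F(n)\ \ge\ (\omega+o(1))\,HL,
\]
with the same constant $\omega>0$. Granting this, the lemma follows by the standard positivity observation of the Goldston-Pintz-Yildirim method: since $\Hh$ is fixed and $n\le x+H<2x$ we have $n+h_i<3x$ for every $i$, so if $n+\Hh$ contains at most one prime then $\sum_{i}\theta(n+h_i)\le\log(3x)$ and hence $F(n)\le0$; thus the positive part of the displayed sum is carried entirely by those $n\in[x,x+H]$ for which $n+\Hh$ contains at least two primes. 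For each such $n$, the trivial bound \eqref{aaa} gives $0\le F(n)\le k_0\log(3x)(\lambda(n))^{2}\ll_\varepsilon k_0 x^{2\varepsilon}\log x$, so the number $N^{*}$ of such $n$ satisfies $N^{*}\gg_\varepsilon HL/(k_0 x^{2\varepsilon}\log x)=x^{1-2\varepsilon}(\log x)^{k_0+l_0-A}/k_0\ge x^{1-3\varepsilon}$ for $x$ large; as $\varepsilon>0$ is arbitrary this yields $N^{*}\ge(1+o(1))x^{1-\varepsilon}$, as required.

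Everything thus rests on the short-interval inequality, which I would prove by running Zhang's proof of \eqref{a} over $(x,x+H]$ in place of $(x,2x]$. There one expands $S_{1}:=\sum_{x\le n\le 2x}(\lambda(n))^{2}$ and $S_{2}:=\sum_{x\le n\le 2x}\p{\sum_{i}\theta(n+h_i)}(\lambda(n))^{2}$ via \eqref{lambdadef}, writes each as a main term plus an error term, and verifies $S_{2}-\log(3x)S_{1}=(\omega+o(1))xL$. The crucial point is that both main terms are \emph{linear in the length of the range of summation}: for $S_{1}$ it is $(\mathrm{length})\cdot\sum_{d_{1},d_{2}<D,\ d_{i}\mid\mathcal P}\mu(d_{1})\mu(d_{2})g(d_{1})g(d_{2})\,\rho([d_{1},d_{2}])/[d_{1},d_{2}]$, with $\rho(q)$ the number of residues $n\bmod q$ such that $q\mid P(n)$, and similarly for $S_{2}$; hence over $(x,x+H]$ both are exactly $H/x$ times their values over $(x,2x]$, which produces the main term $(\omega+o(1))HL$. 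The ``elementary'' part of the error --- counting integers in residue classes for $S_{1}$, and the prime number theorem error in the main part of $S_{2}$ --- is $\ll_\varepsilon D^{2+\varepsilon}+x\exp(-c\sqrt{\log x})$; since $2(\varpi+1/4)=1/2+1/584$ and $HL\gg x/(\log x)^{O(1)}$, this is $o(HL)$ for $\varepsilon$ small, and so is harmless even on the short interval.

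The single genuinely arithmetic ingredient is the error term of $S_{2}$ coming from the distribution of $\theta$ in arithmetic progressions. For this one needs the short-interval analogue of Zhang's Theorem~2: the discrepancy sum $\sum_{q<D^{2},\ q\mid\mathcal P}(\cdots)\abs{\,\sum_{x<m\le x+H,\ m\equiv a\,(q)}\theta(m)-\tfrac1{\phi(q)}\sum_{x<m\le x+H,\ (m,q)=1}\theta(m)}$, with exactly the divisor weights and residue restrictions $(\cdots)$ of his Theorem~2, should be $\ll_{B}x(\log x)^{-B}$ for every $B>0$. I would deduce this from Zhang's Theorem~2 \emph{exactly as stated}, by the routine device of writing $\mathbf 1_{(x,x+H]}=\mathbf 1_{(0,x+H]}-\mathbf 1_{(0,x]}$ in the inner sum, splitting each half-line dyadically, applying Theorem~2 to each dyadic block $(X,2X]$ with $X\ge x^{1/2}$ (legitimate because his theorem holds for all large values of its parameter), and bounding the $O(\log x)$ remaining blocks of length $<x^{1/2}$ trivially via $\sum_{m\le x^{1/2}}\theta(m)\ll x^{1/2}$ (which, after the weighted sum over $q<D^{2}$, contributes only $\ll x^{1/2+\varepsilon}$). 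The single logarithm lost in the splitting is harmless since $B$ is arbitrary; and because $HL\gg x/(\log x)^{O(1)}$ the whole error is $o(HL)$. Combining this with the linear scaling of the main terms gives the short-interval inequality.

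I expect the transfer of Theorem~2 to the short interval to be the only point requiring real care, and it succeeds precisely because $H=x/(\log x)^{A}$ is still $x^{1-o(1)}$, so the target $HL$ comfortably dominates the mere power-of-$\log$ saving that Theorem~2 supplies. For a genuinely short interval, of length $x^{1-\delta}$ with $\delta>0$ fixed, one would instead need a variant of Theorem~2 with a power of $x$ saved in the error term --- exactly the ``sharper variant'' alluded to in the abstract. One should also record that $\omega$ and all implied constants in the main-term evaluation are uniform in the location of the length-$H$ interval inside $[x,3x]$; this is immediate, since those main terms depend on the interval only through its length.
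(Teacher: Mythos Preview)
Your argument is correct and follows the same route as the paper: establish a short-interval version of Zhang's inequality (the paper packages this as Lemma~3), feed in a short-interval variant of Zhang's Theorem~2 (the paper's Lemma~2), and then combine the positivity observation with the pointwise bound $\lambda(n)\ll_\varepsilon x^\varepsilon$ to extract the count.

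The one place worth tightening is your derivation of the short-interval Theorem~2 via a full dyadic decomposition of $(0,x]$ and $(0,x+H]$. When you invoke Zhang's Theorem~2 on a block $(X,2X]$ with $X$ noticeably smaller than $x$, the level in that instance is $D_X^2=X^{1/2+2\varpi}$, which is strictly smaller than the fixed level $D^2=x^{1/2+2\varpi}$ appearing in the outer sum over $d$; the moduli in the gap $(D_X^2,D^2]$ are therefore not covered by the theorem at parameter $X$, and you do not say how to handle them. The paper sidesteps this entirely with a one-line observation: since $H=x(\log x)^{-A}$, a bound of the form $x(\log x)^{-B'}$ for the discrepancy over $(x,x+H]$ --- obtained from at most two applications of Theorem~2 at parameters $\sim x$, hence with the \emph{same} $D$ and $\mathcal P$ --- is already $H(\log x)^{A-B'}$, and one simply takes $B'=A+B$. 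Your dyadic detour is unnecessary here (it would only be needed for intervals of length $x^{1-\delta}$, which, as you yourself note, requires a genuinely stronger input). Also, for $H=x(\log x)^{-A}$ the classical prime number theorem error $x\exp(-c\sqrt{\log x})$ that you quote does suffice; the paper's appeal to Heath-Brown is there only because its Lemma~3 is formulated for $\Delta(x)$ as small as $x^{7/12}$.
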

\begin{proof} Lemma 1 follows from using Lemma 2 to estimate the error term in Lemma 3 and Eq. \eqref{aaa}. \end{proof}

As the second Lemma we will state a variant of \cite[Theorem 2]{Zhang} which is a strong version of the Bombieri-Vinogradov theorem.

\begin{lem} Let $\Delta(x)=x (\log x)^{-A}$, and assume that
  \begin{gather} \label{Deltadef} 
    \Delta(\gamma;d,c)=\sum_{\substack{n \equiv c \pmod d \\ x \leq n \leq x+\Delta(x)}} \gamma(n) - \frac 1 {\phi(d)}\sum_{ x \leq n \leq x+\Delta(x)}\gamma(n) \qquad \text{ for} \qquad (d,c)=1.   \\ \intertext{Then for any $B>0$ and $1 \leq i \leq k_0$} \notag
    \sum_{\substack{d<D^2 \\ d | \mathcal P}} \sum_{c \in \mathcal C_i(d)} \abs{\Delta(\theta;d,c)} \ll_{A,B} \Delta(x) (\log x)^{-B}, \\  \intertext{where} \notag \mathcal C_i(d)= \{c:1\leq c \leq d, (c,d)=1, P(c-h_i) \equiv 0 \pmod d \} \, \, \, \text { for } \, \, \,  1 \leq i \leq k_0. \end{gather}
  \end{lem}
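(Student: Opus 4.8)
**The plan is to deduce Lemma 2 from Zhang's Theorem 2 by a dyadic/averaging argument over the location of the short interval.** Zhang's Theorem 2 gives the distribution estimate $\sum_{d<D^2, d|\mathcal P}\sum_{c\in\mathcal C_i(d)}|\Delta(\theta;d,c)|\ll y(\log y)^{-B}$ where $\Delta(\theta;d,c)$ is defined with respect to the \emph{long} interval $[x,2x]$ (or $[y,2y]$). The first step is to note that the short-interval discrepancy $\Delta(\gamma;d,c)$ associated with $[x,x+\Delta(x)]$ can be written as a difference of two long-interval-type quantities. Concretely, for a smooth (or sharp) cutoff one writes $\mathbf 1_{[x,x+\Delta(x)]}$ and uses that a sum over a short interval is a telescoping difference of sums over initial segments $[x_0, u]$; alternatively one covers $[x,x+\Delta(x)]$ by $O((\log x)^A)$ consecutive dyadic-type blocks. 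Either way, the key identity is that $\Delta(\theta;d,c)$ for the short interval is controlled by the \emph{maximal} discrepancy over sub-intervals, so it suffices to prove a version of Zhang's Theorem 2 with a supremum over the endpoint inside.

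**Second step: introduce the supremum over the interval endpoint and absorb it into Zhang's proof.** The honest way to get the short-interval estimate is to observe that Zhang's proof of Theorem 2 actually produces, after Cauchy--Schwarz and the dispersion method, bounds on bilinear/trilinear forms that are insensitive to replacing the sharp cutoff of $[x,2x]$ by an arbitrary interval $[x,x+\Delta(x)]$ with $\Delta(x)=x(\log x)^{-A}$, provided one tracks the extra $(\log x)^A$ factors. The point is that all of Zhang's main lemmas (his Lemmas on Type I, II, III sums) are proved with power-saving error terms $y^{1-\delta}$ for some fixed $\delta>0$, so shrinking the interval by a factor $(\log x)^{-A}$ and allowing a loss of $(\log x)^{O(1)}$ is completely harmless: one still beats any fixed power of $\log x$. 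So the plan is: run Zhang's argument verbatim, replacing every occurrence of the characteristic function of $[x,2x]$ by that of $[x,x+\Delta(x)]$, carry the parameter $A$ through, and note that the power-saving bounds $\ll y^{1-\delta}$ downgrade at worst to $\ll \Delta(x)\,y^{-\delta/2}$, which still gives the claimed $\ll_{A,B}\Delta(x)(\log x)^{-B}$.

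**The main obstacle** is the Type III sums estimate (Zhang's treatment of the sum involving three smooth-ish factors near $y^{1/2}$), because there the interval length enters the exponential-sum/Weyl-differencing step more delicately than in the Type I/II case: the trivial bound for the relevant incomplete Kloosterman or Birch--Bombieri type sum must still save a power of $y$ over the diagonal even when the $n$-variable is restricted to a window of length $\Delta(x)$ rather than length $\asymp y$. One must check that the ranges of the smooth variables in Zhang's factorisation of $\mathcal P$ are unaffected (they are, since $D_1$ and $D$ are unchanged), and that the Poisson-summation / completion step over the short window produces the same main term plus an error with a genuine power saving. I expect this to go through because $\Delta(x)$ is only polylogarithmically smaller than $y$, so completion of the short sum costs only $(\log x)^{O(A)}$ and the power saving survives, but it is the step that genuinely requires re-inspecting Zhang's inequalities rather than quoting them.

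**Remaining steps are routine bookkeeping:** pass from the smoothed cutoff back to the sharp one (standard, at the cost of $(\log x)^{O(1)}$), sum the $\ll O((\log x)^A)$ short blocks if one used the covering approach, and absorb all accumulated logarithmic factors into the final $(\log x)^{-B}$ by choosing Zhang's internal parameter large enough in terms of $A$ and $B$. This yields the stated bound $\sum_{d<D^2,\,d|\mathcal P}\sum_{c\in\mathcal C_i(d)}|\Delta(\theta;d,c)|\ll_{A,B}\Delta(x)(\log x)^{-B}$ uniformly in $1\le i\le k_0$, completing the proof of Lemma 2.
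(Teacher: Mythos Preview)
Your plan is vastly more complicated than what is needed, and in fact you already wrote down the one step that suffices before abandoning it. The paper's proof is a single sentence: since $\Delta(x)=x(\log x)^{-A}$, the discrepancy over $[x,x+\Delta(x)]$ is the difference of two discrepancies over intervals of length $\asymp x$; applying Zhang's Theorem~2 as a black box to each gives
\[
\sum_{\substack{d<D^2\\ d\mid\mathcal P}}\sum_{c\in\mathcal C_i(d)}|\Delta(\theta;d,c)|\ \ll\ x(\log x)^{-B'}=\Delta(x)(\log x)^{A-B'},
\]
and since $B'$ is arbitrary one simply takes $B'=A+B$. That is the whole argument: the $(\log x)^{A}$ lost by passing from $x$ to $\Delta(x)$ is absorbed because Zhang already saves an \emph{arbitrary} power of $\log x$.

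Your ``second step'' and the discussion of Type~III sums propose to reopen Zhang's proof and rerun it with the cutoff $\mathbf 1_{[x,x+\Delta(x)]}$ in place of $\mathbf 1_{[x,2x]}$. This is not wrong in spirit, and indeed the paper notes (Remark~2) that a genuine short-interval version of Zhang's Theorem~2 for $\Delta(x)=x^\theta$ would be of great interest. But for $\Delta(x)=x(\log x)^{-A}$ none of this is required, and the obstacle you flag in the Type~III sums is one you have created for yourself. You also do not need any supremum over endpoints or any maximal version of Zhang's theorem: two fixed endpoints suffice, by the telescoping you yourself mention. The covering by ``$O((\log x)^A)$ dyadic-type blocks'' is confused and should be dropped; it is neither needed nor does it reduce to anything you can feed into Zhang's statement.
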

\begin{proof} Although stated for short intervals $[x,x+\Delta(x)]$ instead of $[x,2x]$ this follows directly from Zhang's theorem 2. This is because $\Delta(x)$ is just $x$ multiplied by a power of $\log x$ and this power can be accounted for in the error term. \end{proof}

\begin{rem} If we can prove Lemma 2 for $\Delta(x)=x^\theta$ and  some $7/12 <\theta<1$ it would follow that the intervals  $[x,x+x^\theta]$ contains at least $(1+o(1))x^{\theta-\varepsilon}$ weak prime pairs.  This would be of great interest. We have chosen to formulate Lemma 3 such that a proof of Lemma 2 in a short interval will immediately give such a consequence.   Although we have not yet checked all the details of Zhang's proof of Theorem 2,  his approach seems promising also for short intervals. In particular he at many places gets somewhat sharper results than he needs that might be more useful in the short interval case. We also remark that the corresponding short interval version of the classical Bombieri-Vinogradov theorem has been proved by Perelli-Pintz-Salerno \cite{PPS1,PPS2}.
\end{rem}

\begin{lem}
  Assume that $\Hh$ is admissible and $\Hh$ contains at least $k_0=3.5 \cdot 10^6$ elements. Then  there exists some $B>0$ such  that with $\Delta(\theta;d,c)$  defined by \eqref{Deltadef} and $0<\Delta(x) \ll x$ we have that  
 \begin{multline} \notag \sum_{x \leq n \leq x+\Delta(x)} \left( \sum_{i=1}^{k_0} \theta(n+h_i) -\log(3x) \right) (\lambda(n))^2 \geq  (\omega+o(1)) \Delta(x) (\log x)^{k_0+l_0+1}  + \\  +
\mathcal O \left((\log x)^B \sum_{i=1}^{k_0}\sqrt{ \Delta(x)  \sum_{\substack{d<D^2 \\ d | \mathcal P}} \sum_{c \in \mathcal C_i(d)} \abs{\Delta(\theta;d,c)}} \right) + \mathcal O \left(x^{7/12}\right),
\end{multline}
 where we can choose $\omega=\exp(-5 \cdot 10^7)$.
\end{lem}
\begin{proof} We will follow Zhang \cite{Zhang} and just indicate where changes are needed. The essence in the changes is that we would like $n \sim x$ to mean $x \leq n \leq x+\Delta(x)$ instead of  $x \leq n \leq 2x$.  We now proceed  as in Zhang \cite{Zhang}. We write the left hand side of the lemma as
\[
  S_2-\log(3x) S_1,
\] 
where
\begin{gather*}
   S_1=  \sum_{x \leq n \leq x+\Delta(x)} (\lambda(n))^2,   \\ \intertext{and}          S_2=\sum_{x \leq n \leq x+\Delta(x)}(\lambda(n))^2 \,
     \sum_{i=1}^{k_0} \theta(n+h_i). \end{gather*}
We will treat $S_1$ and $S_2$ separately.
\subsection*{Upper bound for $S_1$}

 We follow Zhang \cite{Zhang}, section 4. We treat the inner sum that corresponds to the first displayed formula on p. 16 in the same way. Proceeding in a similar manner, instead of eq. $(4.1)$ in Zhang  we  get  
\begin{gather*} S_1=\mathcal T_1 \Delta(x)+\mathcal  O(D^{2+\varepsilon}). \label{ajj}
\end{gather*} 
The quantity $ \mathcal T_1$ is the same in Zhang, and from his $(4.19)$ we obtain similarly to $(4.20)$ in his paper that
\begin{gather} \label{ac}
  S_1 \leq \frac{(1+\kappa_1+o(1))}{(k_0+2 l_0)!} \binom {2l_0}{l_0} \mathfrak S(\mathcal H) x (\log D)^{k_0+2 l_0}+ \mathcal O (D^{2+\varepsilon}),\end{gather}  for some $\kappa_1< \exp(-1200)$.

\subsection*{Lower bound for $S_2$} 
Again we will change the summation order as in Zhang \cite[p. 22]{Zhang} and we get
\begin{gather} \notag %\label{aj}
 S_2=  k_0  \mathcal T_2 \sum_{x \leq n \leq x+\Delta(x)} \theta(n) + \sum_{i=1}^{k_0} \mathcal O (\mathcal E_i), \\ \intertext{where $\mathcal T_2$ is defined as in his paper and} \label{Edef}
\mathcal{E}_i = \sum_{\substack{d<D^2 \\ d | \mathcal P}} \tau_3(d) \rho_2(d) \sum_{c \in C_i(d)} \left| \Delta(\theta;d,c) \right|, \end{gather} 
where we now keep in mind that $\Delta(\theta;d,c)$ is defined by \eqref{Deltadef}, i.e. summing over $n$ in shorter intervals than in Zhang \cite{Zhang}.
 We may now assume that $\Delta(x) \gg x^{7/12} (\log x)^{-k_0-l_0-1}$, since otherwise the result follows trivially by the error term $\mathcal O(x^{7/12})$. This allows us to use the prime number theorem in short intervals of Heath-Brown \cite{HeathBrown}, instead of the classical prime number theorem in Zhang's treatment. This gives us (compare with \cite[Eq (5.4)]{Zhang}) that
\begin{gather*}
 S_2=  k_0  \mathcal T_2 \Delta(x)(1+o(1)) + \sum_{i=1}^{k_0}  \mathcal O (\mathcal E_i).
\end{gather*} 
Following Zhang\footnote{We are grateful to GH and Denis Chaperon de Lauzi at mathoverflow for explaining precisely how  Zhang used Cauchy's inequality. See  http://mathoverflow.net/questions/132452.} \cite{Zhang} we use the Cauchy inequality on Eq. \eqref{Edef} to estimate the error terms
\begin{gather*}
\mathcal{E}_i \ll \sqrt{ \sum_{\substack{d<D^2 \\ d | \mathcal P}} (\tau_3(d))^2 (\rho_2(d))^2 \sum_{c \in C_i(d)}\left| \Delta(\theta;d,c) \right|} \sqrt{\sum_{\substack{d<D^2 \\ d | \mathcal P}} \sum_{c \in C_i(d)} \left| \Delta(\theta;d,c) \right|}.
 \end{gather*}
 By using the trivial\footnote{this corresponds to each integer being prime in the residue class for the short interval} upper estimate  $|\Delta(\theta;d,c)|\ll \Delta(x)\log x/\phi(d)$ in the first sum, and estimates for sum of divisor functions in short intervals (\cite[Lemma 8]{Zhang}), this gives us the estimate 
\begin{gather} \label{ajaj}
\mathcal{E}_i \ll \mathcal (\log x)^{B} \sqrt{\Delta(x)}  \sqrt{ \sum_{\substack{d<D^2 \\ d | \mathcal P}} \sum_{c \in C_i(d)} \left| \Delta(\theta;d,c) \right|},
\end{gather}
for some positive constant $B>0$. Since $\mathcal T_2$ is the same as in Zhang's paper, it can be estimated by \cite[Eq (5.5)]{Zhang}. Corresponding to Eq (5.6) in Zhang we get the inequality
\begin{gather} \label{ad}
  S_2 \geq \frac{k_0(1-\kappa_2)} {(k_0+2l_0+1)!} \binom{2l_0+2} {l_0+1} \mathfrak S (\Hh) \Delta(x) (\log D)^{k_0+2l_0+1} (1+o(1)).
\end{gather}
 By the inequalities \eqref{ac} and \eqref{ad} we obtain
\begin{multline} \label{ab} S_2 - \log(3x) S_1 \geq \\ \geq \omega \mathfrak S (\Hh) \Delta(x) (\log D)^{k_0+2l_0+1} (1+o(1)) + \mathcal O(D^{2+\epsilon})+\mathcal O\left (\sum_i \mathcal E_i \right), \end{multline}
where
\begin{gather*}
 \omega=\frac 1 {(k_0+2l_0)!} 
\binom{2l_0}{l_0} \left( \frac{2(2 l_0+1)k_0(1-\kappa_2)} {(l_0+1)(k_0+2l_0+1)}-\frac{4(1+\kappa_1)}{1+4 \varpi} \right).
\end{gather*}
While Zhang never calculates $\omega$ and just say it is positive, by the estimates $\kappa_1 <\exp(-1200)$ and $\kappa_2 < 10^8 \exp(-1200)$ from Zhang \cite{Zhang}, and the numerical values of $l_0,k_0,\varpi$ it is easy to use Mathematica/Sage to see that $\omega=3.647 \cdot 10^{-21385285}>\exp(-5 \cdot 10^7).$ The results follows by combining \eqref{ajaj} with \eqref{ab} with the fact that with our choice or $D$ we have $\mathcal O(D^{2+\epsilon})=\mathcal O(x^{7/12})$.
\end{proof}

\end{document}